\tikzstyle{arc}=[->,shorten <=3pt, shorten >=3pt,
\tikzstyle{edge}=[shorten <=2pt, shorten >=2pt,
\tikzstyle{vertex}=[circle, fill=white, draw,
\DeclareMathOperator{\girth}{girth}
\newtheorem{theorem}{Theorem}
\newtheorem*{theorem*}{Theorem}
\newtheorem{lemma}[theorem]{Lemma}
\newtheorem{observation}[theorem]{Observation}
\newtheorem{proposition}[theorem]{Proposition}
\newtheorem{corollary}[theorem]{Corollary}
\newtheorem*{problem}{Problem}
\newtheorem*{question}{Question}
\newtheorem{conjecture}[theorem]{Conjecture}
\title{Double circuits in bicircular matroids%
\thanks{The content of this note is included and extended in: arXiv:2209.06591}}
\author[1,2]{Santiago~Guzm\'an-Pro\thanks{sanguzpro@ciencias.unam.mx}}
\author[2]{Winfried~Hochst\"attler\thanks{winfried.hochstaettler@fernuni-hagen.de}}
\affil[1]{\small{Facultad de Ciencias\\
Universidad Nacional Aut\'onoma de M\'exico\\
C.P. 04510, Ciudad Universitaria, M\'exico}}
\affil[2]{FernUniversit\"at in Hagen\\
Fakult\"at f\"ur Mathematik und Informatik\\
 58084 Hagen}
\begin{document}
\date{}
\maketitle

\begin{abstract}
The first non-trivial case
of Hadwiger's conjecture for oriented matroids reads as follows. If $\mathcal{O}$
is an $M(K_4)$-minor free
oriented matroid, then 
$\mathcal{O}$ has a now-where $3$-coflow, i.e., it is $3$-colourable in the sense of
Hochst\"attler-Ne\v{s}et\v{r}il.
The class of gammoids is a class of $M(K_4)$-free orientable
matroids and it is the minimal minor-closed class that contains all transversal
matroids. Towards proving the previous statement for the class of gammoids,
Goddyn, Hochst\"attler, and Neudauer conjectured that every gammoid has a
positive coline (equivalently, a positive double circuit), which implies that all
orientations of gammoids are $3$-colourable. This conjecture stems from their
proof that every cobicircular matroid has a positive double circuit. In this brief
note we disprove Goddyn, Hochst\"attler, and Neudauers' conjecture by
exhibiting a large class of bicircular matroids that do not contain positive
double circuits.
\end{abstract}

\section{Introduction}

Hadwiger's Conjecture is a well-known and long open conjecture regarding proper graph
colourings. It states that for every positive integer $k$ if a graph $G$ contains no
$K_{k+1}$-minor, then $G$ is $k$-colourable. This conjecture has been proven 
true for $k\le 5$ \cite{robsertsonC13}, and remains open for larger integers. 

The notion of proper graph colourings is extended to oriented matroids in
different ways.
In particular, Hochst\"attler and Ne\v{s}et\v{r}il~\cite{hochstattlerEJC27} propose to
define the chromatic number of an oriented matroid in terms of nowhere-zero
coflows ($NZ$ coflows). It turns out that Hadwiger?s
conjecture can be generalized to this context; but in this scenario,
the first non-trivial case remains open and it reads as follows.

\begin{conjecture}\label{conj:hadw}
Every (loopless) $M(K_4)$-minor free oriented matroid has a nowhere-zero $3$-coflow.
\end{conjecture}

Towards proving Conjecture~\ref{conj:hadw}, Goddyn, Hochst\"attler and Neudauer,
introduce the class of \textit{Generalized Series Parallel} ($GSP$) oriented matroids
and show that every $GSP$ oriented matroid has a NZ
$3$-coflow~\cite{goddynDM339}. 
It might be too much to hope for, but if every $M(K_4)$-free oriented matroid
is $GSP$, then Conjecture~\ref{conj:hadw} follows directly. In any case,
this raises the fundamental problem of determining when a class $\mathcal{C}$
of oriented matroids is a subclass of $GSP$ oriented matroids. To this end
and in the same work, the previously mentioned authors show that if
$\mathcal{C}'$ is a class of orientable matroids closed under minors such that
every member of $\mathcal{C}'$ has a  positive coline, then the class $\mathcal{C}$
of all orientations of matroids in $\mathcal{C}'$ is a class of $GSP$ oriented
matroids. Finally, they show that every bicircular matroid has a positive coline,
so, if $\mathcal{O}$ is an oriented bicircular matroid,  then
$\mathcal{O}$ is $GSP$ and thus it has a NZ $3$-coflow. 

Bicircular matroids are transversal matroids, and the smallest class
closed under minors that contains transversal matroids is the class of gammoids. 
In turn, the class gammoids is a class of $M(K_4)$-free orientable matroids, so 
Goddyn, Hochst\"attler and Neudauer pose the following conjecture. 

\begin{conjecture}\label{conj:coline}\cite{goddynDM339}
Every simple gammoid of rank at least two has a positive coline.
\end{conjecture}

In this work, we disprove this conjecture by exhibiting a large class of
cobicircular matroids that do not have positive colines.

The rest of this work is organized as follows. In Section~\ref{sec:prelim},
we introduce all concepts needed to state  Conjecture~\ref{conj:coline}.
In Section~\ref{sec:counter}, we introduce bicircular
matroids and prove the necessary results to exhibit a class of
counterexamples to the previously mentioned conjecture. 
Finally, in Section~\ref{sec:conclusions} we notice that a simple
observation made in Section~\ref{sec:counter} relates 
to known results about representability of bicircular matroids, and
we pose some questions that arise from this relation.

\section{Preliminaries}
\label{sec:prelim}

As previously mentioned, Conjecture~\ref{conj:coline}
is motivated by showing that all orientations of gammoids are $GSP$, 
and thus exhibiting a large class of $M(K_4)$-free oriented matroids
that admit a $NZ$ $3$-coflow. Nonetheless, we can state and disprove
Conjecture~\ref{conj:coline} without introducing $GSP$ oriented matroids
and $NZ$ $3$-coflows. For the sake of simplicity, 
we only introduce concepts and definition used in this work. 
A standard reference for matroid theory is \cite{oxley1992}.

Let $M$ be a matroid. A \textit{copoint} of $M$ is a hyperplane, that is, 
a flat of codimension $1$. A \textit{coline} of $M$ is a flat of codimension $2$. 
If a coline $L$ is contained in a copoint $H$, we say that $H$ is a \textit{copoint on}
$L$. It is not hard to notice that if $L$ is a coline of a matroid $M$ then there
is a partition $(H_1,\dots, H_k)$ of $E(M)\setminus  L$ such that the copoints
on $L$ are $L\cup H_i$ for $i\in\{1,\dots k\}$. Furthermore, this partition is unique
(up to permutations) so we call it the \textit{copoint partition} of $L$, and
define the \textit{degree} of $L$ to be $k$. A class $H_i$ is \textit{singular}
if $|H_i| = 1$; otherwise it is a \textit{multiple} class.
A coline $L$ is \textit{positive} if there are more
singular that multiple classes  in its copoint partition.
It turns out that positive colines and $GSP$ oriented matroids are related as follows.

\begingroup
\def\thetheorem{\cite{goddynDM339}}
\begin{proposition}
Let $\mathcal{C}$ be minor closed class of orientable matroids. 
If every simple matroid in $\mathcal{C}$ has a positive coline, then
every orientation of a matroid in $\mathcal{C}$ is $GSP$.
\end{proposition}
\addtocounter{theorem}{-1}
\endgroup

Using this proposition, Goddyn, Hochst\"attler, and Neudauer
\cite{goddynDM339} show that every orientation of a bicircular matroid
is $GSP$. Every bicircular matroid is a transversal matroid, and the class
of gammoids is the smallest dually and minor closed class that contains transversal 
matroids \cite{ingletonJCTB15}. These facts motivate Goddyn, Hochst\"attler,
and Neudauer to conjecture that every orientation of a gammoid is $GSP$. 
Furthermore, they conjecture that the following statement is true.

\begingroup
\def\thetheorem{\ref{conj:coline}}
\begin{conjecture}\cite{goddynDM339}
Every simple gammoid of rank at least two  has a positive coline.
\end{conjecture}
\addtocounter{theorem}{-1}
\endgroup

Circuits are the dual complements of hyperplanes. That is, if $H$ is a
hyperplane of a matroid $M$ then $E(M)\setminus  H$ is a circuit of $M^\ast$. 
A \textit{double circuit} of a matroid $M$ is a set $D$ such that $r(D) = |D|-2$
and for every element $d\in D$ the rank of $D-d$ does not decrease, i.e.\
$r(D-d) = |D| -2 = r(D)$. Dress and Lov\'asz \cite{dressC7} show that if $D$ is a
double circuit then $D$ has a partition $(D_1,\dots, D_k)$ such that the circuits
of $D$ are $D\setminus  D_i$ for $i\in\{1,\dots, k\}$. We call this partition the \textit{circuit
partition} of $D$ and say that the \textit{degree} of $D$ is $k$. 

\begin{observation}\label{obs:dcseries}
Let $M$ be a matroid and  $D\subseteq E(M)$ a double circuit of $M$.
If $D$ is a degree $k$ double circuit, then $M[D]$ is a series 
extension of $U_{k-2,k}$.
\end{observation}
\begin{proof}
With out loss of generality suppose that $D = E(M)$, and let
$(D_1,\dots, D_k)$ be the circuit partition of $D$.  If $|D_i| = 1$ for every
$i\in\{1,\dots, k\}$, then $M \cong U_{k-2,k}$. Now, the claim follows 
by a straightforward induction over the difference  $|D| - k$.
\end{proof}

Similar to how circuits are the dual complements of copoints, 
double circuits are the complements of colines. Moreover, the 
copoint partitions and circuit partitions relate as follows.

\begin{observation}\label{obs:coline-doublec}
Let $M$ be a matroid, $L\subseteq E(M)$ and $(H_1,\dots H_k)$ a partition of
$E(M)\setminus  L$. Then, $L$ is  a coline of $M$ with copoint partition
$(H_1,\dots, H_k)$ if and only if $E(M)\setminus  L$ is a double circuit of $M^\ast$
with circuit partition $(H_1,\dots, H_k)$.
\end{observation}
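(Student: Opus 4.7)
The plan is to reduce everything to two textbook dualities of matroid theory: the rank formula $r_{M^{\ast}}(S)=|S|-r(M)+r_{M}(E\setminus S)$, and the fact that a set $H'\subseteq E$ is a hyperplane of $M$ if and only if $E\setminus H'$ is a circuit of $M^{\ast}$ (a cocircuit of $M$). Once these are in hand, the observation becomes a direct translation. Set $D:=E(M)\setminus L$.

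First I would handle the ``defining'' conditions (coline vs.\ double circuit). From the rank-duality formula, $r_{M^{\ast}}(D)=|D|-r(M)+r_{M}(L)$, so $r_{M^{\ast}}(D)=|D|-2$ if and only if $r_{M}(L)=r(M)-2$. Similarly, for $d\in D$, $r_{M^{\ast}}(D-d)=|D|-1-r(M)+r_{M}(L\cup\{d\})$, so the double-circuit condition $r_{M^{\ast}}(D-d)=r_{M^{\ast}}(D)$ for every $d\in D$ is equivalent to $r_{M}(L\cup\{d\})=r_{M}(L)+1$ for every $d\in E(M)\setminus L$; together with $r_M(L)=r(M)-2$, this is exactly the statement that $L$ is a flat of codimension $2$, i.e.\ a coline of $M$.

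Second I would handle the correspondence of partitions. By the preliminaries, the copoint partition of a coline and the circuit partition of a double circuit are each unique (up to reordering). Hence it suffices to show that, for a fixed partition $(H_1,\ldots,H_k)$ of $D=E(M)\setminus L$, the set $L\cup H_i$ is a copoint on $L$ for each $i$ if and only if $D\setminus H_i$ is a circuit of $M^{\ast}$ contained in $D$ for each $i$. But $L\cup H_i$ is a hyperplane of $M$ exactly when $E(M)\setminus(L\cup H_i)=D\setminus H_i$ is a circuit of $M^{\ast}$, which is precisely the required equivalence.

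The whole argument is essentially bookkeeping once the two duality facts are invoked, and I do not expect any substantive obstacle; the only point requiring mild care is to verify that ``rank increases upon adding any outside element'' genuinely characterises $L$ being a flat, so that the double-circuit condition $r_{M^{\ast}}(D-d)=r_{M^{\ast}}(D)$ translates to closure (not merely to the correct rank). With that noted, both directions of the biconditional, including the identification of the blocks of the two partitions, follow simultaneously.
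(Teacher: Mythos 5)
Your argument is correct. The paper offers no proof of this observation (it is simply labelled ``a straight forward statement''), and your derivation --- the rank formula $r_{M^{\ast}}(S)=|S|-r(M)+r_{M}(E\setminus S)$ to translate the coline condition into the double-circuit condition, plus the hyperplane/cocircuit complementation to match the blocks of the two (unique) partitions --- is exactly the standard bookkeeping the authors are implicitly relying on.
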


A \textit{positive double circuit} $D$ is a double circuit with more singular
than multiple classes in its circuit partition. By Observation~\ref{obs:coline-doublec},
a matroid $M$ has a positive double circuit if and only if $M^\ast$
has a positive coline. Since gammoids are closed under duality, 
the following conjecture is equivalent to Conjecture~\ref{conj:coline}.

\begin{conjecture}\cite{goddynDM339}\label{con:posdouble}
Every cosimple gammoid of corank at least two  has a positive double circuit.
\end{conjecture}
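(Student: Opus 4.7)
The plan is to attack Conjecture~\ref{con:posdouble} through its equivalent form via Observation~\ref{obs:coline-doublec} combined with the closure of the class of gammoids under duality: every simple gammoid of rank at least two has a positive coline. I prefer this dual reformulation because colines are rank-$2$ flats, and the copoint partition of a coline has an immediate geometric meaning in terms of the line structure of $M$, whereas the circuit partition of a double circuit is less easy to access directly from a gammoid representation.

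First, I would fix a representation of a simple gammoid $M$ coming from a digraph $D$, a set of sinks $B \subseteq V(D)$, and a ground set $E \subseteq V(D)$, with independence given by the existence of vertex-disjoint $E$--$B$ linkings. Copoints of $M$ then correspond to maximal closed non-spanning sets, which in this representation arise from saturated vertex cuts separating $E$ from $B$, and a coline $L$ of degree $k$ corresponds to two ``nested'' obstructions together with the partition of $E(M)\setminus L$ induced by the $k$ copoints on $L$. The goal is to locate a coline $L$ on which singular classes of the copoint partition outnumber multiple ones.

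Second, I would proceed by induction on $|E(M)|$. The base case $r(M)=2$ is immediate: $M$ is $U_{2,n}$ for some $n\ge 2$ and the empty set is a positive coline of degree $n$. For the inductive step, using cosimplicity to guarantee room to maneuver, I would pick an element $e\in E(M)$ such that $M\setminus e$ is simple of rank at least two, apply induction to obtain a positive coline $L'$ of $M\setminus e$, and analyze how $e$ interacts with the copoint partition $(H_1,\dots,H_k)$ of $L'$. Three cases arise: $e$ lies in the closure of $L'$ in $M$ (so $L=L'\cup\{e\}$ is still a coline and the partition is inherited); $e$ belongs to exactly one copoint $L'\cup H_i$ of $M$ (so $H_i$ absorbs $e$); or $e$ lies outside every current copoint, producing a new singular class $\{e\}$. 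The first and third scenarios preserve or strengthen positivity; only in the second can a singular class turn into a multiple class.

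The main obstacle, and the step I expect to be genuinely hard, is controlling the second scenario across the induction. To make the argument go through one needs an invariant of the gammoid representation guaranteeing that, summed over a suitable choice of $e$, enough fresh singular classes are created to dominate the absorbed ones. A natural attempt is to exploit a path-cover structure in $D$: if every vertex outside $B$ can be ``charged'' to a path in a fixed linkage, then the number of copoints absorbing a new element should be bounded in terms of that path structure, and one would hope to iterate this bound into a global counting inequality. Establishing such a counting inequality uniformly over all gammoid representations, rather than just over the bicircular representations treated in~\cite{goddynDM339}, is the crux of the problem; I would expect the inductive framework to assemble cleanly but the quantitative control in the second scenario to be the real bottleneck.
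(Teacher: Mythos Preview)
Your proposal attempts to \emph{prove} Conjecture~\ref{con:posdouble}, but the paper \emph{disproves} it. The main result, Theorem~\ref{thm:main}, shows that if $G$ is a graph with $\operatorname{girth}(G)\ge 5$ then the bicircular matroid $B(G)$ has no positive double circuit at all; since bicircular matroids are transversal, hence gammoids, this yields explicit cosimple gammoids of corank at least two with no positive double circuit (Corollary~\ref{cor:counter}). Dually, $B(G)^\ast$ for such $G$ (for instance the Petersen graph or the dodecahedron) is a simple gammoid of rank at least two with no positive coline. So the very statement you are trying to establish is false, and no inductive scheme can succeed.

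Concretely, the place where your argument must break is exactly the ``second scenario'' you yourself flag as the bottleneck: when the deleted element $e$ is absorbed into a copoint and turns a singular class into a multiple one. You hope for a counting inequality, uniform over all gammoid representations, guaranteeing that enough fresh singular classes appear to compensate. No such inequality exists. Take $M = B(P)^\ast$ with $P$ the Petersen graph: every coline of $M$ has at least as many multiple classes as singular ones, so whatever element $e$ you delete and whatever positive coline $L'$ of $M\setminus e$ you obtain by induction, reinserting $e$ necessarily destroys positivity. The obstruction is not a matter of choosing $e$ more cleverly or of finding a better invariant of the digraph representation; the target inequality is simply false for this $M$. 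The paper's contribution is precisely to exhibit this failure and thereby refute the conjecture, rather than to supply the missing step you were looking for.
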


\section{Bicircular matroids and double circuits}
\label{sec:counter}

Every bicircular matroid is a transversal matroid~\cite{matthewsQJMOS28},
and so, every bicircular matroid is a gammoid. In this section,  we disprove 
Conjecture~\ref{conj:coline} by showing that its dual statement,
Conjecture~\ref{con:posdouble},
does not hold for bicircular matroids. To do so, we begin by briefly introducing
the class of bicircular matroids.

A standard reference for graph theory is \cite{bondy2008}. In particular, 
given a graph $G$ and a subset of edges $I$  we denote by
$G[I]$ the subgraph of $G$ induced by $I$. That is,  $G[I]$ is the subgraph
of $G$ with edge set $I$ and no isolated vertices.

Let $G$ be a (not necessarily simple) graph with vertex set $V$ and edge set $E$.
The \textit{bicircular matroid} of $G$ is the matroid $B(G)$ with base set $E$
whose independent sets are the edge sets $I\subseteq E$ such that  $G[I]$
contains at most one cycle in every connected component. 
Equivalently, the circuits of $B(G)$ are the edge sets of subgraphs which are
subdivisions of one of the graphs: two loops on the same vertex, two loops joined
by an edge, or three parallel edges joining a pair of vertices.

Mathews \cite{matthewsQJMOS28} noticed that there are only a few uniform
bicircular matroids. 

\begin{theorem}\label{thm:uniformbicircular}\cite{matthewsQJMOS28}
The uniform bicircular matroids are precisely the following:
\begin{itemize}
	\item $U_{1,n}$, $U_{2,n}$, $U_{n,n,}$ ($n\ge 0$);
	\item $U_{n-1,n}$ ($n\ge 1$);
	\item $U_{3,5}$, $U_{3,6}$ and $U_{4,6}$.
\end{itemize}
\end{theorem}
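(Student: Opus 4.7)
The plan is to translate the equation $B(G) = U_{r,n}$ into combinatorial constraints on $G$ and then proceed by case analysis on the rank $r$. I would begin by recording two basic facts: first, a single edge of $G$ is never a loop of $B(G)$, and two edges form a parallel pair of $B(G)$ precisely when they are both loops at a common vertex of $G$, so the simplicity of $U_{r,n}$ (for $r \ge 2$) forces $G$ to carry at most one loop per vertex; second, the bicircular rank is $|V(G)|$ minus the number of acyclic connected components of $G$ (after deleting isolated vertices), which bounds $|V(G)|$ in terms of $r$. These facts together with the circuit classification already stated (subdivisions of two loops at a common vertex, of two loops joined by an edge, or of three parallel edges) supply the language for the case analysis.

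The extreme ranks dispatch quickly. Rank zero forces $E(G) = \emptyset$. For $r = 1$, every pair of edges must itself be a $2$-element circuit, which by the circuit classification forces $G$ to be a bouquet of loops at a single vertex, yielding $U_{1,n}$. For $r = 2$, every triple of edges must be a $3$-element bicircular circuit; a short case analysis on how the three basic $3$-circuit shapes can share vertices pins down $G$ to consist of parallel edges between two vertices together with at most one loop at each of them, giving $U_{2,n}$. Dually, $r = n$ means $G$ is itself independent and hence a pseudoforest, which realises $U_{n,n}$; while $r = n - 1$ means $E(G)$ itself is a circuit, so $G$ is a subdivision of one of the three basic circuit graphs and realises $U_{n-1,n}$.

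The heart of the proof is the intermediate range $3 \le r \le n - 2$. Here I would fix a basis $B$ of $B(G)$ and exploit that by uniformity the fundamental circuit $C(e, B)$ must equal all of $B \cup \{e\}$ for every $e \notin B$; in graph language, every non-basis edge must simultaneously complete a bicircular circuit using \emph{every} edge of $B$. Combined with the rank formula above, this drastically restricts $|V(G)|$, and hence $|E(G)|$. A finite check on the surviving configurations then leaves only three graphs: $K_3$ together with loops at two of its vertices (realising $U_{3,5}$), $K_3$ together with a loop at each vertex (realising $U_{3,6}$), and $K_4$ (realising $U_{4,6}$).

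The main obstacle is the bookkeeping in this intermediate range: the constraints interact in a way that makes it easy to miss a candidate configuration. I would organise the argument by first using the rank formula to bound $|V(G)|$ in terms of $r$, then casing on how edges outside a fixed basis attach to the basis vertices, and finally verifying that beyond the three sporadic realisations every remaining candidate graph exhibits a $4$-subset of edges that is not a bicircular circuit, contradicting uniformity.
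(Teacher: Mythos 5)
The paper offers no proof of this theorem --- it is imported from Matthews --- so there is nothing internal to compare against; I can only assess your argument on its own terms. Your overall plan (translate uniformity into graph conditions, use the rank formula and the circuit classification, handle the extreme ranks separately, then bound the intermediate range) is viable, and the cases $r\in\{0,1,2\}$ and $r\in\{n-1,n\}$ are handled correctly. However, your realizations of the two sporadic rank-$3$ matroids are wrong. If $G$ is $K_3$ with (graph) loops at two of its vertices, say $\ell_a$ at $a$ and $\ell_b$ at $b$, then $\{\ell_a, ab, \ell_b\}$ is exactly ``two loops joined by an edge'', i.e.\ a $3$-element circuit of $B(G)$; since this matroid has rank $3$, it is not $U_{3,5}$. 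The same dumbbell sits inside $K_3$ with a loop at every vertex, so that graph does not give $U_{3,6}$ either. (Your $B(K_4)=U_{4,6}$ is correct.) The correct realizations avoid loops and use doubled edges: the triangle with exactly two edges doubled gives $U_{3,5}$, and the triangle with all three edges doubled gives $U_{3,6}$ --- there every $4$-subset is a theta or a tight handcuff and no $3$-subset contains two independent cycles. This is not cosmetic: the existence half of the theorem requires exhibiting graphs that actually realize $U_{3,5}$ and $U_{3,6}$, and your ``finite check'' as described terminates in two graphs that do not.

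The second gap is that in the range $3\le r\le n-2$ you assert that uniformity ``drastically restricts $|V(G)|$, and hence $|E(G)|$'' without giving the mechanism. The rank formula only yields $|V(G)|=r$ (after noting that $U_{r,n}$ is a connected matroid here, so $G$ is connected and contains a cycle); it does not bound $r$ or $n$. The missing step is a degree count: every $(r+1)$-subset of $E(G)$ is a circuit, hence a connected bicycle on $r$ vertices, hence spanning, and every vertex of such a subgraph is incident with at least two of its edges. Since only $n-r-1$ edges are omitted from such a subset, every vertex of $G$ is incident with at least $2+(n-r-1)=n-r+1$ edges, so counting vertex--edge incidences gives $2n\ge r(n-r+1)$, i.e.\ $n\le r(r-1)/(r-2)$. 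Combined with $n\ge r+2$ this leaves only $(r,n)\in\{(3,5),(3,6),(4,6)\}$, at which point your finite verification (with the corrected graphs) can take over. With these two repairs the argument goes through.
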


Recall that if a matroid $M$ has a double circuit of degree $k$ then 
$M$ contains a $U_{k-2,k}$ minor (Observation~\ref{obs:dcseries}).

\begin{corollary}\label{cor:6}
The  degree of a double circuit in a bicircular matroid is at most $6$.
\end{corollary}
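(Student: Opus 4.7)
The plan is to combine the minor-existence fact recalled immediately before the corollary with Matthews' classification (Theorem~\ref{thm:uniformbicircular}).

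First I would let $M$ be a bicircular matroid carrying a double circuit of degree $k$. By the recalled observation, $M$ then has $U_{k-2,k}$ as a minor. Next I would invoke the classical fact that the class of bicircular matroids is closed under taking minors (a result going back to Matthews), so that $U_{k-2,k}$ is itself a bicircular matroid and therefore must appear in the list of Theorem~\ref{thm:uniformbicircular}.

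It then suffices to scan that list for uniform matroids of the form $U_{k-2,k}$. The families $U_{n,n}$ and $U_{n-1,n}$ never match, since $k-2$ equals neither $k$ nor $k-1$; the families $U_{1,n}$ and $U_{2,n}$ match only in the two isolated instances $U_{1,3}$ and $U_{2,4}$, forcing $k=3$ and $k=4$ respectively; and among the three sporadic matroids $U_{3,5},U_{3,6},U_{4,6}$, only $U_{3,5}$ and $U_{4,6}$ are of the form $U_{k-2,k}$ (giving $k=5$ and $k=6$), while $U_{3,6}$ is not. Hence $k\in\{3,4,5,6\}$, and in particular $k\le 6$.

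The argument is essentially bookkeeping once the minor-closedness of bicircular matroids is in hand; that invocation is the only nontrivial input and deserves a citation, and the rest is an immediate parametric check against Theorem~\ref{thm:uniformbicircular}. I do not expect any serious obstacle.
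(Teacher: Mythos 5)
Your proof is correct and is exactly the argument the paper intends: the corollary is stated without proof right after recalling that a degree-$k$ double circuit forces a $U_{k-2,k}$ minor, and your completion --- minor-closedness of the class of bicircular matroids (due to Matthews, and implicitly used again in the proof of Theorem~\ref{thm:main} when edges are contracted) together with the parametric scan of Theorem~\ref{thm:uniformbicircular} singling out $U_{4,6}$ as the extremal case --- is precisely the missing bookkeeping. No gaps.
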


Suppose that a graph $G$ is obtained by subdividing edges of a 
graph $H_G$ with minimum degree $3$. It is not hard to notice that
$H_G$ is unique up to isomorphism. The \textit{subdivision classes} of
$G$ are the sets of edges that correspond to a series of subdivisions of
an edge in $H_G$. An \textit{unsubdivided edge} of $G$ is an edge of $G$
that is an edge of $H_G$. Clearly, if $G$ has no
leaves an edge $xy$ is an unsubdivided edge of $G$
if and only if  $d_G(x),d_G(y)\ge 3$.

\begin{lemma}\label{lem:basic}
Let $G$ be a graph and $D\subseteq E$ a double circuit of $B(G)$.
Then $G[D]$ has no leaves and contains at most $4$ vertices
$x_1, x_2,x _3$ and $x_4$ of degree greater than or equal to $3$. Moreover,
every subdivision class of $G[D]$ belongs to the same class of the circuit partition
of $D$.
\end{lemma}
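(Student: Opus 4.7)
My plan rests on three ingredients: every circuit of a bicircular matroid is a connected subdivision of one of the three ``bicycle'' graphs (two loops at a vertex, handcuff, or theta) and in particular has minimum degree $2$ together with at most two vertices of degree $\ge 3$; the rank formula $r(X) = |V(G[X])| - t(G[X])$ for bicircular matroids, where $t$ counts the acyclic (tree) components of $G[X]$; and the elementary fact that in a double circuit of degree $k \ge 2$ every element belongs to at least one of the circuits $D \setminus D_i$.

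First I will dispose of the no-leaves claim: a hypothetical leaf $v$ of $G[D]$ with unique incident edge $e$ would have degree $1$ in any circuit $C_i$ of $D$ containing $e$, contradicting the minimum-degree-$2$ property of bicircular circuits. The same idea forbids tree components of $G[D]$, because a bicircular circuit is connected and cyclic, hence cannot lie inside an acyclic component, while every edge of $D$ lies in some circuit. Thus $t(G[D]) = 0$; combining $r(D) = |D| - 2$ with the rank formula gives $|D| - |V(G[D])| = 2$, so the cyclomatic number of $G[D]$ equals $s + 2$, where $s$ is the number of components. Since $B(G)$ restricted to $D$ decomposes as the direct sum of the restrictions to the components, their coranks in $B(G)$ add to $2$; and since a unicyclic component contains no bicircular circuit—again contradicting that its edges lie in circuits—every component has corank at least $1$, so $s \in \{1, 2\}$. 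If $s = 2$, each component has cyclomatic number $2$ and minimum degree $2$, hence is itself a subdivision of figure-eight, handcuff, or theta and contributes at most two branch vertices, totalling at most $4$. If $s = 1$, let $H_0$ be the graph obtained from $G[D]$ by suppressing degree-$2$ vertices; suppression preserves the cyclomatic number, so $|E(H_0)| = |V(H_0)| + 2$, and the handshaking inequality together with $\delta(H_0) \ge 3$ yields $3|V(H_0)| \le 2|E(H_0)| = 2|V(H_0)| + 4$, i.e.\ $|V(H_0)| \le 4$.

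For the final assertion it suffices, by transitivity, to show that two edges $e, f$ meeting at a degree-$2$ vertex $v$ of $G[D]$ lie in the same class $D_j$ of the circuit partition. In any circuit $C_i$ the degree of $v$ is at most $2$, and by the no-leaves property applied to $C_i$ it cannot equal $1$; hence $\{e, f\} \subseteq C_i$ or $\{e, f\} \cap C_i = \emptyset$. If $e \in D_j$ and $f \in D_{j'}$ with $j \ne j'$, then $C_j = D \setminus D_j$ would contain $f$ but not $e$, a contradiction. The step I expect to be the main obstacle is the component-count case split in the branch-vertex bound; it becomes tractable only after observing that unicyclic components are excluded by the ``every edge lies in a circuit'' property, which is what pins down $s \le 2$ and lets the handshaking estimate close the connected case.
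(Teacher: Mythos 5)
Your proof is correct and follows essentially the same route as the paper: leaves are excluded because every element of a double circuit lies in a circuit and bicircular circuits have minimum degree $2$; the bound of $4$ branch vertices comes from the handshaking count combined with $r(D)=|V(G[D])|=|D|-2$; and the subdivision-class claim follows because a circuit meeting a degree-$2$ vertex must use both of its edges. Your detour through the component count $s\le 2$ and the suppressed graph is a correct but unnecessary elaboration --- the paper gets $t\le 4$ in one line from $2|D|\ge 3t+2(|V(G[D])|-t)$ applied globally to $G[D]$.
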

\begin{proof}
Since every element of $D$ belongs to a circuit of $D$, then $D$ has no
coloops so $G[D]$ has no leaves.
Let $V'$ be the vertex set of $G[D]$ and $r'$ the rank of $D$, so $r'= |V'|$.
Since $D$ has no leaves then $d(x)\ge 2$ for every $e\in V'$. Let $t$ be
the number of vertices in $V'$ with degree at least $3$ in $G[D]$. 
By the handshaking lemma, $2|D| \ge 3t + 2(r'-t)$. Since  $D$ is a double circuit
then $r' = |D| -2$, and thus  $2(r'+2) \ge 3t + 2(r'-t)$, so $t \le 4$. 
Finally, let $S$ be a subdivision class of $G[D]$. Then $S$ is the edge set of a
path $P$ path such that the internal vertices of $P$ have degree $2$ in $G[D]$.
Thus, every cycle in $G[D]$ that contains an edge of $P$ contains all edges of $P$. 
Hence, every circuit of $B(G)$ in $D$ that contains an edge in $P$ contains
all of them, so $E(P)$ is contained  in some circuit class of $D$.
\end{proof}

Given a double circuit $D$ of a bicircular matroid $B(G)$, a
\textit{distinguished vertex} of $D$ is a vertex of degree at least $3$ in $G[D]$.
Since $G[D]$ has no leaves, the subdivision classes of $G[D]$ correspond to
paths that contain distinguished vertices (only) as endpoints. In particular,
unsubdivided edges of $G[D]$ are edges incident in distinguished vertices.

\begin{theorem}\label{thm:main}
Let $G$ be a graph. If $\girth(G) \ge 5$ then $B(G)$ has no positive double circuits.
\end{theorem}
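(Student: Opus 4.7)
The plan is to argue by contradiction: assume $D$ is a positive double circuit with circuit partition $(D_1, \dots, D_k)$ having $s$ singular and $m$ multiple classes and $s > m$. I combine three ingredients---a uniform bound $s \le 3$, a lower bound on circuit sizes, and a short topological enumeration---to reduce to a single case ruled out by explicit analysis.

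First I would show $s \le 3$. Since girth at least $5$ makes $G$ simple and, by Lemma~\ref{lem:basic}, each singular class is a single subdivision class of size one, each singular class is an unsubdivided edge whose endpoints are both distinguished. The union $U$ of these edges lies on the at-most-four distinguished vertices and inherits girth at least $5$; since no graph on at most four vertices with girth $\ge 5$ contains a cycle, $U$ is a forest, so $s = |E(U)| \le 3$.

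Next I would use that every bicircular circuit in a girth-$5$ graph has at least $8$ edges, which follows from a short check on the three circuit shapes (figure-eight, dumbbell, theta), the minimum being realised by a theta with path lengths $2, 3, 3$. Combined with $s \le 3$ and Corollary~\ref{cor:6}: the case $k = 6$ would need $s \ge 4$; the case $m = 0$ gives each circuit size $s - 1 \le 2$; and the case $m = 1$ gives $|D \setminus D_j| = s \le 3$ for the unique multiple class $D_j$. All contradict the $8$-edge bound, leaving only $s = 3, m = 2, k = 5$. A brief component analysis (each $D_i$ must absorb all but one connected component of $G[D]$, else $D \setminus D_i$ would be disconnected) further forces $G[D]$ to be connected.

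In this last case, the topological multigraph $H$ of $G[D]$ is a connected $3$-regular multigraph on four vertices with six edges, and $U$ is a spanning tree on its vertex set. I would enumerate by the number $L$ of loops of $H$. If $L = 0$, a degree argument gives that $H$ is either $K_4$ or the multigraph obtained from a $4$-cycle by doubling two opposite edges, both bridgeless; hence every single-edge removal yields a valid circuit and $k = 6 \ne 5$. If $L = 1$, the unique such $H$ has a single bridge at the looped vertex, which must be paired with the loop, giving $k = 5$; however, the girth-$5$ triangle inequality on the three other vertices forces at least two of the remaining four subdivision classes to have length $\ge 2$, yielding $s \le 2$, a contradiction. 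If $L \ge 2$, each loop pairs with its non-loop incident edge, leaving $k \le 4$. The main obstacle is this final enumeration; the key observations are that a bridge in $H$ would isolate a vertex carrying only loops (ruling out bridges when $L = 0$), and that in the $L = 1$ case three length-$1$ singletons would either produce a triangle of length $3$ or a multi-edge, both forbidden by girth $5$.
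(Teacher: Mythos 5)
Your proposal is correct, and while its skeleton (bound the degree by Corollary~\ref{cor:6}, exploit Lemma~\ref{lem:basic} to identify singular classes with unsubdivided edges on at most four distinguished vertices, and reduce everything to the case of degree $5$ with exactly $3$ singular classes) matches the paper's, the two arguments diverge in how they dispatch the cases. Your preliminary reductions use a quantitative tool the paper does not need --- the observation that every bicircular circuit in a girth-$5$ graph has at least $8$ edges --- whereas the paper simply notes that the union of the singular classes is itself a circuit spanning two cycles and reads off $girth(G)\le 3$ or $\le 4$ directly; these are the same idea in different packaging. The real difference is in the final case $k=5$, $s=3$: the paper has a short counting argument (after contracting subdivided edges, $D_0$ has $6$ edges in $5$ classes, three of them the singletons $\{e_1\},\{e_2\},\{e_3\}$, so a fourth singleton $\{e_4\}$ is forced, and then the circuit $\{e_1,e_2,e_3,e_4\}$ would have to contain two independent cycles although $e_1,e_2,e_3$ span a tree --- a contradiction that never invokes the girth hypothesis again). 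You instead enumerate the connected cubic multigraphs $H$ on four vertices by their number of loops and rule each out separately (bridgeless loopless graphs force degree $6$; one loop forces a multiple class through the looped vertex and then a triangle or digon among the three singletons; two or more loops force degree at most $4$). Your enumeration is correct --- in particular your parity argument that a bridge in a loopless cubic multigraph on four vertices would isolate a looped vertex is sound, and $K_4$ and the doubled $4$-cycle are indeed the only loopless possibilities --- but it is appreciably longer than the paper's counting step, which in exchange yields the slightly stronger conclusion that the three singular edges always contain a cycle. One small remark: in your $L=1$ case there is an even quicker kill than the triangle inequality, since the three singular edges must form a spanning tree of $\{x_1,\dots,x_4\}$ yet none of the candidate singular edges is incident with the looped vertex.
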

\begin{proof}
We proceed by contrapositive. Suppose that $D$ is a positive double circuit in $B(G)$
of degree $k$ with circuit partition $(D_1,\dots D_k)$ where $k\le 6$ by
Corollary~\ref{cor:6}.
If $k \in \{1,2,3,4\}$ with out loss of generality assume that $(D_1,\dots, D_{k-1})$
are singular circuit classes of $D$. Since the union $D'$ of these classes is $D\setminus D_k$, 
then this union is a circuit of $D$. Thus, $G[D']$ contains two cycles of $G$, so
 $\girth(G)\le |D'| \le 3$. 

Now suppose that $k \in\{5,6\}$ and $(D_1,\dots, D_k)$ has at least $4$ simple classes
$\{e_1\},\dots \{e_4\}$.  By the moreover  statement of Lemma~\ref{lem:basic}, the
edges $e_1$, $e_2$, $e_3$ and $e_4$ must be unsubdivided edges of $G[D]$. 
Thus, the endpoints of these edges are distinguished vertices of $G[D]$, which
by the same lemma there are at most $4$ of these vertices. Putting all of this 
together we conclude that $D'$ is a set of four edges such that $G[D']$ has
at most $4$ vertices. Therefore, $G[D']$ contains at least one cycle of
$G$, and so $\girth(G)\le |D'| \le 4$.

The only remaining case is when the degree of $D$ is $5$ and it has $3$ singular
classes. In this case there are $3$ unsubdivided edges $e_1,e_2,$ and $e_3$
of $G[D]$. We claim that  $\{e_1,e_2,e_3\}$ contains a cycle of $G$, and thus
$\girth(G)\le 3$. Anticipating a contradiction, suppose that $\{e_1,e_2,e_3\}$ does
not contain a cycle of $G$. This implies that $D$ has four distinguished
$x_1$, $x_2$, $x_3$ and $x_4$. Notice that if we contract a subdivided edge
$e$ of $G[D]$ we obtain
a double circuit $D'$ of $B(G)/e$ with the same degree as $D$. Inductively, we end
up with a double circuit $D_0$ and a graph $H$ with edge set $D_0$ such that 
$\{e_1,e_2,e_3\}\subseteq  D_0$ and $V(H) = \{x_1,x_2,x_3,x_4\}$. Moreover, 
$\{e_1,e_2,e_3\}$ spans a tree of $H$. On the other hand,
each edge $e_i$ for $i\in\{1,2,3\}$ belongs to a singular class of the circuit partition
of $D_0$. Since the rank of $D_0$ is $4$ then
$D_0$ contains at most six edges. Also, the circuit partition of $D_0$ has $5$ classes, 
so there must be an edge  $e_4\in D_0\setminus\{e_1,e_2,e_3\}$ that belongs to 
a singular class. Notice that that $\{e_1,e_2,e_3,e_4\}$ contains at most one
cycle of $H$ since $\{e_1,e_2,e_3\}$ spans a tree of $H$. On the other hand,
$D_0\setminus \{e_1,e_2,e_3,e_4\}$ is a class of $D_0$, so
$\{e_1,e_2,e_3,e_4\}$ is a circuit of $D_0$, i.e.\ $\{e_1,e_2,e_3,e_4\}$ contains
two cycles of $H$. We arrive at this contradiction by assuming that
$\{e_1,e_2,e_3\}$ does not contain a cycle of $G$,  thus $\girth(G)\le 3$, and
the theorem follows.
\end{proof}

This statement yields a large class $\mathcal{C}$ of bicircular matroids with no positive
double circuits.  For instance, if $G$ is the dodecahedron graph and $P$ the
Petersen graph (Figure~\ref{fig:petdod}) then $B(G)$
and $B(P)$ are cosimple bicircular matroids of corank at least two  that do not
have positive double circuits. Dually, $B(G)^\ast$
and $B(P)^\ast$ are simple matroids of rank at least two  that do not
have positive colines.


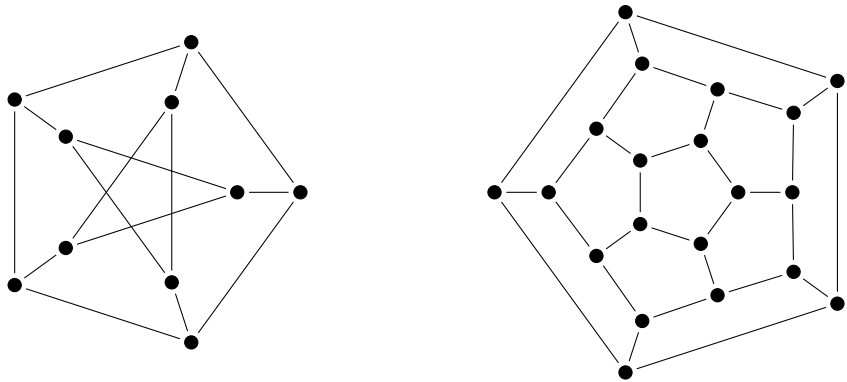
\begin{figure}[ht!]
\begin{center}

\begin{tikzpicture}[scale=0.6]

\begin{scope}[xshift=-6cm, yshift = 0cm, scale=0.7]
\node [vertex] (0) at (0:3){};
\node [vertex] (1) at (288:3){};
\node [vertex] (2) at (216:3){};
\node [vertex] (3) at (144:3){};
\node [vertex] (4) at (72:3){};

\node [vertex] (10) at (0:5){};
\node [vertex] (11) at (288:5){};
\node [vertex] (12) at (216:5){};
\node [vertex] (13) at (144:5){};
\node [vertex] (14) at (72:5){};

\foreach \from/\to in {0/2, 0/3, 1/3, 1/4, 2/4}
\draw [edge] (\from) to (\to);
\foreach \from/\to in {10/11, 12/13, 11/12, 13/14, 10/14}
\draw [edge] (\from) to (\to);
\foreach \from/\to in {0/10, 1/11, 2/12, 14/4, 3/13}
\draw [edge] (\from) to (\to);

\end{scope}

\begin{scope}[xshift=6cm, yshift=0cm, scale=0.6]
\node [vertex] (0) at (0:2){};
\node [vertex] (1) at (288:2){};
\node [vertex] (2) at (216:2){};
\node [vertex] (3) at (144:2){};
\node [vertex] (4) at (72:2){};

\node [vertex] (01) at (0:4){};
\node [vertex] (101) at (36:5){};
\node [vertex] (41) at (72:4){};
\node [vertex] (141) at (108:5){};
\node [vertex] (31) at (144:4){};
\node [vertex] (131) at (180:5){};
\node [vertex] (21) at (216:4){};
\node [vertex] (121) at (252:5){};
\node [vertex] (11a) at (288:4){};
\node [vertex] (111) at (324:5){};

\node [vertex] (10) at (36:7){};
\node [vertex] (11) at (324:7){};
\node [vertex] (12) at (252:7){};
\node [vertex] (13) at (180:7){};
\node [vertex] (14) at (108:7){};

\foreach \from/\to in {0/1, 2/3, 1/2, 3/4, 0/4}
\draw [edge] (\from) to (\to);
\foreach \from/\to in {10/11, 12/13, 11/12, 13/14, 10/14}
\draw [edge] (\from) to (\to);
\foreach \from/\to in {101/10, 111/11, 121/12, 141/14, 131/13}
\draw [edge] (\from) to (\to);
\foreach \from/\to in {01/0, 11a/1, 21/2, 41/4, 31/3}
\draw [edge] (\from) to (\to);

\foreach \from/\to in {01/101, 101/41, 41/141, 141/31, 31/131, 131/21, 21/121, 121/11a,
				11a/111, 111/01}
\draw [edge] (\from) to (\to);

\end{scope}

\end{tikzpicture}

\caption{The Petersen graph and the dodecahedron.}
\label{fig:petdod}
\end{center}
\end{figure}

Recall that every transversal matroid is a gammoid, and since bicircular matroids
are transversal matroids~\cite{matthewsQJMOS28}, Theorem~\ref{thm:main} 
yields a class of cosimple gammoids of corank at least two  that do not have
positive double circuits.

\begin{corollary}\label{cor:counter}
Not every cosimple  gammoid of corank at least two has a positive double
circuit. Equivalently, not every simple gammoid of rank at least two has a
positive coline.
\end{corollary}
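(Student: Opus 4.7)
The plan is to derive the corollary as a direct consequence of Theorem~\ref{thm:main} by exhibiting explicit witnesses, namely the Petersen graph $P$ and the dodecahedron $D$ shown in Figure~\ref{fig:petdod}. Both have girth $5$, minimum degree $3$, and are $3$-edge-connected, so Theorem~\ref{thm:main} immediately gives that $B(P)$ and $B(D)$ have no positive double circuit.

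It then remains to check that $B(P)$ and $B(D)$ are cosimple gammoids of corank at least two. That they are gammoids follows from citing Matthews' theorem that every bicircular matroid is transversal together with the already-cited fact that every transversal matroid is a gammoid. For the corank I would use that for a connected graph $G$ containing a cycle, $B(G)$ has rank $|V(G)|$ since a unicyclic spanning subgraph is a basis; hence $|E(G)|-|V(G)|$ gives corank $5$ for Petersen and $10$ for the dodecahedron, both at least two.

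For cosimplicity I need to rule out cocircuits of size at most two in $B(G)$. Given any $S\subseteq E(G)$ with $|S|\le 2$, the $3$-edge-connectivity of $G$ keeps $G\setminus S$ connected, and because $|E(G)|\ge |V(G)|+2$ in both cases $G\setminus S$ still has at least $|V(G)|$ edges, so it is not a tree and must contain a cycle. Consequently $G\setminus S$ admits a unicyclic spanning subgraph and $r(B(G)\setminus S)=|V(G)|=r(B(G))$, so $S$ is not a cocircuit. Thus $B(G)$ has no cocircuit of size one (no coloops) and no cocircuit of size two (no coparallel pairs), i.e., $B(G)$ is cosimple.

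Finally, the ``Equivalently'' clause follows because gammoids are closed under duality, so $B(G)^{*}$ is a simple gammoid of rank at least two, and by Observation~\ref{obs:coline-doublec} the absence of a positive double circuit in $B(G)$ is equivalent to the absence of a positive coline in $B(G)^{*}$. All the real content is already in Theorem~\ref{thm:main}; the only mildly fiddly step, and hence the main obstacle if one needs to name one, is the cosimplicity check, which reduces as above to the short graph-theoretic argument about deleting at most two edges from a $3$-edge-connected graph of sufficiently large excess.
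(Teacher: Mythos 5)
Your proof is correct and follows essentially the same route as the paper: the paper also obtains the corollary by applying Theorem~\ref{thm:main} to the Petersen and dodecahedron graphs and then invoking that bicircular matroids are transversal and hence gammoids. The only difference is that you spell out the corank and cosimplicity verifications (via $3$-edge-connectivity and the edge count), which the paper simply asserts; your argument for these is sound.
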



\section{Conclusions}
\label{sec:conclusions}

A simple observation used to prove Theorem~\ref{thm:main} shows that the degree
of double circuits in bicircular matroids is bounded above by $6$. This raises
the natural problem of describing the classes of matroids obtained by considering
bicircular matroids whose double circuits have degree at most $k$ where
$k\in\{3,4,5\}$. This question has been answered from
another perspective for $k = 3$: the positive double circuits in a bicircular matroid
$B$ have degree at most $3$ if and only if $B$ is a binary bicircular matroid
\cite{matthewsQJMOS28}.
If we also restrict the degree of colines, for the case $k = 4$ we recover
ternary bicircular matroids \cite{sivaramanDM328}. For the case $k = 5$, considering
bicircular matroids with double circuits and colines  of degree at most $k$, we do
not recover representation over $GF(4)$ since $P_6$ has no positive double
circuits nor colines of degree greater that or equal to $5$, but is a bicircular matroid
not representable over $GF(4)$ \cite{chunDM339}. 
We are interested in knowing if there is a meaningful description of 
bicircular matroids whose double circuits have bounded degree.


\begin{problem}
Provide a meaningful description of bicircular matroids where every double
circuit has degree at most $k$ for  $k\in\{4,5\}$.
\end{problem}

The motivation of this work was to settle Conjecture~\ref{conj:coline};
we showed that it does not hold even for cobicircular matroids. 
Nonetheless, there are $GSP$ oriented matroid, whose underlying matroid
does not have a positive coline, for instance $P_7$ \cite{goddynDM339}. 
So, it still makes sense to ask the following question.

\begin{question}
Is every oriented cobicircular matroid $GSP$?
\end{question}

\section*{Acknowledgements}
This work was carried out during a visit of the first author
at FernUniversit\"at in Hagen, supported by DAAD grant 57552339.


\begin{thebibliography}{15}

\bibitem{bondy2008}
	J.A.~Bondy and U.S.R~Murty,
	Graph Theory,
	Springer, Berlin, 2008.
	
	
\bibitem{chunDM339}
	D.~Chun, T.~Moss, D.~Slilaty, X.~Zhou,
	Bicircular Matroids representable over $GF(4)$ and $GF(5)$, 
	Discrete Mathematics 339(9) (2016) 2239--2248.
	
\bibitem{dressC7}
	A.~Dress and L.~Lov\'asz,
	On some combinatorial properties of algebraic matroids, 
	Combinatorica 7(1) (1987) 39--48.
	

\bibitem{goddynDM339}
	L.~Goddyn, W.~Hochst\"attler, N.~Neudauer,
	Bicircular matroids are $3$-colourable,
	Discrete Mathematics 339(5) (2016) 1425--1429.
	
\bibitem{hochstattlerEJC27}
	W.~Hochst\"attler and J.~Ne\v{s}et\v{r}il,
	Antisymmetric flows in matroids,
	European Journal of Combinatorics 27(7) (2006) 1129--1134.

	
\bibitem{ingletonJCTB15}
	A.W.~Ingleton, 
	Gammoids and Transversal Matroids, 
	Journal of Combinatorial Theory (B) 15 (1973) 51--68.
	
\bibitem{matthewsQJMOS28}
	L.~R.~Matthews,
	Bicircular Matroids,
	Quarterly Journal of Mathematics (2) 28(110) (1997) 213--227.
	
\bibitem{oxley1992}
	J.G.~Oxley, 
	Matroid Theory, 
	The Clarendon Press Oxford University Press, New York (1992).
	
\bibitem{robsertsonC13}
	N.~Robertson, P.~Seymour, R.~Thomas,
	Hadwiger's conjecture for $K_6$-free graphs,
	Combinatorica 13 (1993) 279--361.
	
\bibitem{sivaramanDM328}
	V.~Sivaraman,
	Bicircular signed-graphic matroids,
	Discrete Mathematics 328 (2014) 1--4.
		
   
\end{thebibliography}
\end{document}